\newcommand{\Z}{\mathbb{Z}}
\newcommand{\N}{\mathbb{N}}
\newcommand{\SL}{{\text {\rm SL}}}
\newcommand{\Q}{\mathbb{Q}}
\newcommand{\Gal}{\textrm{Gal}(\overline{\Q}/\Q)}
\newcommand{\trace}{\textrm{trace}}
\newcommand{\Frob}{\textrm{Frob}}
\newcommand{\GL}{\mathbb{GL}}
\theoremstyle{plain}
\newtheorem{theorem}{Theorem}
\newtheorem{proposition}{Proposition}
\theoremstyle{remark}
\newtheorem{remark}{Remark}
\title{Congruences for sporadic sequences and modular forms for non-congruence subgroups}
\author{Matija Kazalicki}
\address{Department of Mathematics, University of Zagreb, Bijeni\v{c}ka cesta 30, 10000 Zagreb, Croatia}
\email{matija.kazalicki@math.hr}
\keywords{}
\begin{document}

\begin{abstract}

In 1979, in the course of the proof of the irrationality of $\zeta(2)$  Robert Ap\'ery introduced numbers $b_n = \sum_{k=0}^n {n \choose k}^2{n+k \choose k}$ that are, surprisingly, integral solutions of recursive relations 
$$(n+1)^2 u_{n+1} - (11n^2+11n+3)u_n-n^2u_{n-1} = 0.$$
Zagier performed a computer search on first 100 million triples $(A,B,C)\in \Z^3$ and found that the recursive relation generalizing $b_n$
$$(n+1)u_{n+1} - (An^2+An+B)u_n + C n ^2 u_{n-1}=0,$$
with the initial conditions $u_{-1}=0$ and $u_0=1$ has (non-degenerate i.e. $C(A^2-4C)\ne 0$) integral solution for only six more triples (whose solutions are so called sporadic sequences) .

Stienstra and Beukers showed that for the prime $p\ge 5$ 
\begin{equation*}
b_{(p-1)/2} \equiv \begin{cases} 4a^2-2p \pmod{p} \textrm{ if } p = a^2+b^2,\textrm{ a odd}\\ 0 \pmod{p} \textrm{ if } p\equiv 3 \pmod{4}.\end{cases}
\end{equation*}

Recently, Osburn and Straub proved similar congruences for all but one of the six Zagier's sporadic sequences (three cases were already known to be true by the work of Stienstra and Beukers) and conjectured the congruence for the sixth sequence (which is a solution of recursion determined by triple $(17,6,72)$. 

In this paper we prove that remaining congruence by studying Atkin and Swinnerton-Dyer congruences between Fourier coefficients of certain cusp form for non-congurence subgroup.

\end{abstract}

\maketitle

\section{Introduction}

\noindent In 1979, in the course of his famous proof of the irrationality of $\zeta(3)$ and $\zeta(2)$ Robert Ap\'ery \cite{Ap} introduced numbers $a_n = \sum_{k=0}^n {n \choose k}^2{n+k \choose k}^2$ and $b_n = \sum_{k=0}^n {n \choose k}^2{n+k \choose k}$. These numbers, which was important for the proof, are integral solutions of recursive relations 
$$(n+1)^3 u_{n+1}-(34n^3+51n^2+27n+5)u_n + n^3 u_{n-1}=0 \quad \textrm{ and }$$
$$(n+1)^2 u_{n+1} - (11n^2+11n+3)u_n-n^2u_{n-1} = 0$$
respectively. The integrality came as a big surprise since to calculate $a_n$ (or $b_n$) in each step one has to divide by $n^3$ (or $n^2$) so a priori one would expect that these numbers have denominators of the size $n!^3$ (or $n!^2)$. Inspired by Beukers \cite{B1}, Zagier \cite{Zag1} performed a computer search on first 100 million triples $(A,B,C)\in \Z^3$ and found that the recursive relation generalizing $b_n$
$$(n+1)u_{n+1} - (An^2+An+B)u_n + C n ^2 u_{n-1}=0,$$
with the initial conditions $u_{-1}=0$ and $u_0=1$ has (non-degenerate i.e. $C(A^2-4C)\ne 0$) integral solution for only six more triples (whose solutions are so called sporadic sequences) 
$$(0,0,-16), (7,2,-8), (9,3,27), (10,3,9), (12,4,32) \textrm{ and } (17,6,72).$$

Interestingly, Stienstra and Beukers \cite{SB} showed that the generating function of Ap\'ery's numbers $b_n$ is a holomorphic solution of Picard-Fuchs differential equation of elliptic K3-surface $\mathcal{S}:X(Y-Z)(Z-X)-t(X-Y)YZ=0$ (other sporadic sequences are related in this way to K3 surfaces as well, see \cite{Zag1}). Using this connection they also proved that for prime $p\ge 5$ 
\begin{equation*}
b_{(p-1)/2} \equiv \begin{cases} 4a^2-2p \pmod{p} \textrm{ if } p = a^2+b^2,\textrm{ a odd}\\ 0 \pmod{p} \textrm{ if } p\equiv 3 \pmod{4}.\end{cases}
\end{equation*}
Here one can interpret the right-hand side of the congruences as a $p$-th Fourier coefficient of a certain $CM$ modular form of weight 3 whose $L$-function is a factor of the zeta function of $\mathcal{S}$. (Later Beukers \cite{B1} proved a similar result for the numbers $a_n$ - this time relating them to the coefficients of Hecke eigenform of weight 4.) For a beautiful survey of these results see \cite{Zag2}.

Recently, Osburn and Straub \cite{OS} proved similar congruences for all but one of the six Zagier's sporadic sequences (three cases were already known to be true by the work of Stienstra and Beukers) and conjectured the congruence for the sixth sequence $F(n)$ (which is a solution of recursion determined by triple $(17,6,72)$. In this paper we prove that remaining congruence.

Denote by $$F(n) = \sum_{k=0}^n (-1)^k 8^{n-k}{n \choose k}\sum_{j=0}^k {k \choose j}^3,$$ the sporadic sequence corresponding to triple $(17,6,72)$. For $\tau \in \mathbb{H}$ and $q=e^{2\pi i \tau}$ let $$f(\tau)=\sum_{n=0}^\infty=q-2q^2+3q^3+\cdots=\sum_{n=0}^\infty \gamma(n)q^n \in S_3\left(\Gamma_0(24),\left( \frac{-6}{\cdot}\right)\right)$$ be a newform. Our main result is the following theorem.

\begin{theorem}\label{thm:1}
For all primes $p>2$ we have
$$F\left(\frac{p-1}{2}\right) \equiv \gamma(p) \pmod{p}.$$
\end{theorem}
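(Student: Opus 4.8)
The plan is to give $F(n)$ a modular--geometric meaning and then read off the congruence from Atkin--Swinnerton-Dyer (ASD) congruences; the twist is that the modular curve in play belongs to a \emph{non-congruence} subgroup. First, from the recursion attached to $(17,6,72)$ one sees that $y(x)=\sum_{n\ge 0}F(n)x^n$ is the holomorphic solution at $x=0$ of a second-order Fuchsian equation with maximal unipotent monodromy, and, as for the other five sporadic cases, this equation carries a modular parametrisation: there is a genus-zero finite-index subgroup $\Gamma\subset\mathrm{SL}_2(\Z)$, a Hauptmodul $t(\tau)$ normalised so that $t(\tau)=q+O(q^2)$, and a weight-one form $g\in M_1(\Gamma)$ with $g(\tau)=\sum_{n\ge 0}F(n)t(\tau)^n$. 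Geometrically $g$ is the holomorphic period of a one-parameter family $\pi\colon\mathcal E\to X_\Gamma$ of elliptic curves, which I would exhibit by an explicit Weierstrass model over $\Q(t)$. The feature that keeps $(17,6,72)$ outside the reach of Stienstra--Beukers and Osburn--Straub is that here $\Gamma$ is non-congruence; identifying it (index, cusps, widths) and writing $t$, $g$ explicitly is the first task.

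Next I would construct the weight-three form. Attached to $\mathrm{Sym}^2 R^1\pi_*\Q$, after splitting off the Tate summand coming from the section at infinity, is a two-dimensional motive over $X_\Gamma$ and a weight-three cusp form $h=\sum_{n\ge 1}c(n)q^n$ on $\Gamma$ with $c(1)=1$; concretely one multiplies $g$ by the weight-two factor furnished by the Gauss--Manin connection and cuspidalises. The bridge back to the sequence is the congruence
$$c(p)\equiv F\!\left(\tfrac{p-1}{2}\right)\pmod p\qquad(p\nmid N),$$
which I would establish by reducing the period $\sum_n F(n)t(\tau)^n$ modulo $p$ and computing the Cartier (Hasse--Witt) operator on the reduction of $\mathcal E$: it extracts the truncation of the period at its middle index, and this is precisely where the half-integral argument $(p-1)/2$ originates, in the same way that the Legendre family produces $\sum_{k\le (p-1)/2}\binom{(p-1)/2}{k}^2\lambda^k$ together with Euler's criterion.

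Finally, by Scholl's construction the $\ell$-adic realisations of this motive (equivalently, the transcendental cohomology of the elliptic surface $\mathcal E$) form a compatible system of two-dimensional Galois representations satisfying ASD congruences, and I would identify the associated classical newform as $f\in S_3(\Gamma_0(24),(\tfrac{-6}{\cdot}))$ by matching conductor, Hodge--Tate weights and the first few Euler factors and invoking rigidity in weight three. The ASD congruences for $h$ then read
$$c(np)-\gamma(p)\,c(n)+\Bigl(\tfrac{-6}{p}\Bigr)p^{2}\,c(n/p)\equiv 0\pmod{p^{\,2(1+\mathrm{ord}_p n)}}\qquad(p\nmid 24),$$
so taking $n=1$ gives $c(p)\equiv\gamma(p)\pmod{p^{2}}$, hence a fortiori $\pmod p$. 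Combining this with the bridge yields $F(\tfrac{p-1}{2})\equiv\gamma(p)\pmod p$ for all $p\notin\{2,3\}$, and $p=3$ is handled directly, $F(1)=6\equiv 0$ and $\gamma(3)=3\equiv 0\pmod 3$.

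The main obstacle is the identification of the classical newform that governs the ASD congruences: Scholl's theorem only guarantees a two-dimensional classical realisation in principle, so pinning it down to the specific level-$24$, weight-three newform requires an explicit arithmetic model of $\mathcal E$ over $\Z[1/6]$, effective control of the Euler factors at the bad primes $2$ and $3$, and a Faltings--Serre or strong-multiplicity-one argument strong enough to exclude the other newforms of comparable conductor; one must also check that $h$ is genuinely an ASD eigenform, i.e.\ spans together with a suitable companion a two-dimensional ASD-stable space, so that the clean $n=1$ case of the congruence is legitimate. The second delicate point is the bridge $c(p)\equiv F((p-1)/2)\pmod p$ itself, which has to be made compatible with the chosen integral normalisations of $\mathcal E$, $t$ and $h$; both of these are more subtle than the surrounding formal manipulations.
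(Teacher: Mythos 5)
Your second half (Scholl's ASD machinery for the one-dimensional space of weight-three cusp forms, identification of the Frobenius traces with the level-$24$ newform by a Faltings--Serre argument plus explicit point counts, and attention to a residual quadratic-twist ambiguity) is essentially the route the paper takes, and you correctly flag it as the delicate part. The genuine gap is in your ``bridge'': the mechanism that puts $F\left(\frac{p-1}{2}\right)$, rather than $F(p-1)$, into the $p$-th coefficient slot. In the paper this comes from a quadratic base change, not from the Cartier operator. The generating function $\sum_{n\ge 0}(-1)^nF(n)t(\tau)^n$ is a weight-one form for the \emph{congruence} group $\Gamma_1(6)$ (your assertion that the weight-one parametrisation already lives on a non-congruence group is off); one then pulls back along $t=s^2$, i.e.\ passes to the K3 double cover of the rational elliptic surface and to the index-two subgroup $\Gamma_2\subset\Gamma_1(6)$ uniformised by $s(\tau)=\sqrt{t(\tau)}$ --- this is exactly where the non-congruence subgroup enters. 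After this substitution $F(n)$ is the coefficient of $s^{2n}$, so Beukers' transfer proposition (comparing the three-term $p$-power congruences for the coefficients of $\omega(s)=\sum(-1)^nF(n)s^{2n}\,ds$ with those of its $q$-expansion, which is the weight-three cusp form $g$) gives $b_p=(-1)^{(p-1)/2}F\left(\frac{p-1}{2}\right)\equiv A_p \pmod p$ with no separate Cartier computation needed. Your proposed substitute, ``the Cartier operator extracts the truncation of the period at its middle index,'' is special to the Legendre family: there the period is a ${}_2F_1(\tfrac12,\tfrac12;1;\lambda)$ whose coefficients in degrees $\frac{p-1}{2}<k\le p-1$ vanish mod $p$, so the Hasse invariant happens to be the middle truncation. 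No such vanishing holds for the $(17,6,72)$ family; the Hasse invariant there is a truncation at degree about $p-1$, and with your normalisation (Hauptmodul $t=q+O(q^2)$, weight-three form $P\cdot q\,\frac{dt}{dq}$ on the $t$-line) the congruence you would actually extract concerns $F(p-1)$, not $F\left(\frac{p-1}{2}\right)$. As written, this step fails and nothing in the proposal replaces it.

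Two further bookkeeping points that only become visible once the double cover is in place: the relevant ASD eigenvalue is $\left(\frac{-1}{p}\right)\gamma(p)$ (the twist of $f$ by $\left(\frac{-1}{\cdot}\right)$), whose sign exactly cancels the factor $(-1)^{(p-1)/2}$ coming from the signs $(-1)^nF(n)$ --- your untwisted statement of the ASD relation would produce a sign error for $p\equiv 3\pmod 4$; and the identification of Scholl's representation with the geometric one is only canonical up to a quadratic twist unramified outside $\{2,3\}$, which the paper eliminates by numerically refuting the twisted ASD congruences, an extra step your sketch does not account for. Your direct verification at $p=3$ ($F(1)=6$, $\gamma(3)=3$) is fine.
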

\begin{remark}
One can check that $f(\tau)$ is CM form such that for prime $p$
\begin{equation*}
\gamma(p) \equiv \begin{cases} 2(a^2-6b^2) \textrm{ if } p = a^2+6b^2\\ 0 \pmod{p} \textrm{ if } p \equiv 5, 11, 13, 17, 19, 23 \pmod{24}.\end{cases}
\end{equation*}

\end{remark}
In Section \ref{sec:2} using the method of Beukers \cite[Proposition 3.]{B1} and Verrill \cite[Theorem 1.1]{V} we reduce Theorem \ref{thm:1} to showing that the weight three cusp form (for non-congruence subgroup $\Gamma_2$ of $\Gamma_1(6)$)  
$$g(\tau) =q^{1/2} + \frac{3}{2} q^{3/2} - \frac{9}{8} q^{5/2} - \frac{85}{16} q^{7/2} - \frac{981}{128} q^{9/2}+\cdots \in S_3(\Gamma_2),$$  
satisfies a three-term Atkin and Swinnerton-Dyer congruence relation with respect to $f(\tau)$ for all primes $p>3$ (see Proposition \ref{prop:2}).
The similar idea was used previously by the author \cite{K} in proving three term congruence relations for some multinomial sums by employing Atkin and Swinnerton-Dyer congruence relations satisfied by the Fourier coefficients of certain weakly holomorphic modular forms (but for congruence subgroups).

In Section \ref{sec:ASD} we explain how using Scholl's theory \cite{Scholl} we can reduce Proposition \ref{prop:2} to the equivalence of two strictly compatible families of $\ell$-adic Galois representations: $\tilde{\rho}_\ell$ isomorphic to $\ell$-adic realization of the motive associated to the space of cusp forms $S_3(\Gamma_2)$, and $\rho'_\ell$ attached to the newform $f(\tau)\otimes \left( \frac{-1}{\cdot}\right)$ by Deligne's work.

In Section \ref{sec:4} and Section \ref{sec:Serre} we prove that these two $\ell$-adic families are isomorphic by showing that they are isomorphic to the third $\ell$-adic family $\rho_\ell$ which is constructed from the explicit model of the universal family of elliptic curves over modular curve of $\Gamma_2$.

\section{Elliptic surfaces, modular forms and the proof of Theorem \ref{thm:1}}\label{sec:2}

Consider modular rational elliptic surface attached to $\Gamma_1(6)$ (see third example in \cite[Section 4.2.2.]{V})
$$\mathcal{W}: (x+y)(x+z)(y+z) - 8xyz =\frac{1}{t} xyz,$$
with fibration $\phi:\mathcal{W} \rightarrow P^1$, $(x,y,z,t) \mapsto t$. For $t \notin \{\infty, 0, -\frac{1}{9}, -\frac{1}{8} \}$ the preimage $\phi^{-1}(t)$ is an elliptic curve with a distinguished point of order $6$. Picard-Fuchs differential equation associated to this elliptic surface 
$$(8t + 1)(9t + 1)P(t)''+ t(144t + 17)P(t)' + 6t(12t + 1)P(t) = 0,$$
has a holomorphic solution around $t=0$ 
$$P(t) = \sum_{n=0}^\infty (-1)^n F(n)t^n.$$
(Our notation is slightly different from \cite[Section 4.2.2.]{V} since $F(n)=(-1)^n c_n$, with $c_n$ defined in \cite{V}) If we identify $t$ with a modular function (for $\Gamma_0(6)$)
$$t(\tau) = \frac{\eta(2\tau)\eta(6\tau)^5}{\eta(\tau)^5\eta(3\tau)}, \quad \tau \in \mathbb{H}$$
then $P(\tau) := \sum_{n=0}^\infty (-1)^n F(n) t(\tau)^n$ is a weight one modular form for $\Gamma_1(6)$.

Now consider a two cover $\mathcal{S}$ of $\mathcal{W}$, a K3-surface given by the equation 
$$\mathcal{S}: (x+y)(x+z)(y+z) - 8xyz = \frac{1}{s^2}xyz,$$
where $t = s^2$. Then $s(\tau)=\sqrt{\frac{\eta(2\tau)\eta(6\tau)^5}{\eta(\tau)^5\eta(3\tau)}}$ is a corresponding modular function for index two genus zero subgroup $\Gamma_2 \subset \Gamma_1(6)$. 

By identifying $s$-line with the modular curve $X(\Gamma_2)$, we can identify singular fibers of K3-surface $\mathcal{S}$ with cusps of modular curve $X(\Gamma_2)$. More precisely, using Tate's algorithm one finds that Kodaira types of singular fibers at $s=\infty, 0, \pm \frac{i}{2\sqrt{2}}$ and $\pm\frac{i}{3}$ are $I_2, I_{12}, I_3, I_3, I_2$ and $I_2$ respectively. Hence all the cusps of $X(\Gamma_2)$ are regular.

In general, for a finite index subgroup $\Gamma$ of $\SL_2(\Z)$ of genus $g$ such that $-I\notin \Gamma$ and $k$ odd, \cite[Theorem 2.25]{Shi} gives the formula for the dimension of $S_k(\Gamma)$
$$\dim S_k(\Gamma) = (k-1)(g-1)+\frac{1}{2}(k-2)r_1+\frac{1}{2}(k-1)r_2+\sum_{i=1}^{j}\frac{e_i-1}{2e_i},$$
where $r_1$ is the number of regular cusps, $r_2$ is the number of irregular cusps, and $e_i's$ are the orders of elliptic points. Since $\Gamma_2$ has no elliptic points ($\Gamma_1(6)$ is a free group), we have that $\dim S_3(\Gamma_2)=1$.

Our starting point for studying congruences involving $F(n)$ is the following proposition of Beukers \cite{B1}.

\begin{proposition}[Beukers]\label{prop:Beukers}
Let $p$ be a prime and 
$$\omega(t)=\sum_{n=1}^\infty b_nt^{n-1}dt$$
a differential form with $b_n\in \Z_p$. Let $t(q)=\sum_{n=1}^\infty A_nq^n$,$A_n\in \Z_p$, and suppose
$$\omega(t(q))=\sum_{n=1}^\infty c_n q^{n-1}dq.$$
Suppose there exist $\alpha_p,\beta_p\in \Z_p$ with $p|\beta_p$ such that 
$$b_{mp^r}-\alpha_p b_{mp^{r-1}}+\beta_pb_{mp^{r-2}}\equiv 0 \pmod{p^r}, \quad \forall m,r\in \N.$$
Then
$$c_{mp^r}-\alpha_p c_{mp^{r-1}}+\beta_pc_{mp^{r-2}}\equiv 0 \pmod{p^r}, \quad \forall m,r\in \N.$$
Moreover, if $A_1$ is $p$-adic unit then the second congruence implies the first, and we have that $b_p \equiv \alpha_p b_1 \pmod{p}$.
\end{proposition}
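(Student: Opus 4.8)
\medskip

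\noindent The plan is to pass to antiderivatives, rephrase everything through Atkin's $U_p$-operator, and then prove that the Atkin--Swinnerton-Dyer (ASD) congruences survive the substitution $t\mapsto t(q)$. Integrating the two displayed differentials and comparing (both antiderivatives vanish at $q=0$, since $t(0)=0$) gives $\sum_{n\ge1}\tfrac{c_n}{n}q^n=\sum_{n\ge1}\tfrac{b_n}{n}t(q)^n$, equivalently, applying $q\frac{d}{dq}$, the $p$-integral identity $C(q):=\sum_{n\ge1}c_nq^n=q\,u(t(q))\,t'(q)$ with $u(t)=\sum_{n\ge1}b_nt^{n-1}$. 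Writing $U_p(\sum a_nq^n)=\sum a_{np}q^n$ and $V_p(\sum a_nq^n)=\sum a_nq^{np}$, I would use the projection formula $U_p(a(q)b(q^p))=b(q)U_p(a(q))$, the congruences $a(q)^{p^r}\equiv a(q^p)^{p^{r-1}}\pmod{p^r}$ valid for $a\in\Z_p[[q]]$ (applied to $a=t(q)$), and the existence, when $A_1\in\Z_p^\times$, of a compositional inverse $q(s)\in s\Z_p[[s]]$ of $t(q)$. In this language the hypothesis reads $(U_p^r-\alpha_pU_p^{r-1}+\beta_pU_p^{r-2})\widetilde b\equiv0\pmod{p^r}$ for all $r\ge1$ (here $\widetilde b(t)=\sum b_nt^n$, and for $r=1$ one reads $U_p^{-1}:=V_p$, whose contribution is killed modulo $p$ by $p\mid\beta_p$), while the conclusion is the identical statement with $\widetilde b$ replaced by $C$.

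\noindent The heart of the matter is a change-of-variables principle: if $\widetilde b\in\Z_p[[t]]$ satisfies the ASD tower, so does its pullback $C$ under any $t\mapsto t(q)\in q\Z_p[[q]]$; the subtlety is that $U_p$ does \emph{not} commute with this substitution, so the congruences cannot be transported naively. I would argue by induction on $r$, keeping the antiderivatives throughout. At each stage one uses $t(q)^p=t(q^p)+pE(q)$ with $E\in\Z_p[[q]]$ — and, for higher levels, $t(q)^{p^r}\equiv t(q^p)^{p^{r-1}}\pmod{p^r}$ — to rewrite $\sum_k\tfrac{b_k}{k}t(q)^k$ and its Atkin-shifts: one splits the index $k$ into residue classes modulo $p$, and checks that after the Frobenius reduction $t(q)^p\equiv t(q^p)\pmod p$ every resulting summand lies in $\Z_p[[q]]$ except for a single one, which reconstitutes the ASD combination for $C$ at the previous level. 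Feeding in the ASD hypothesis for $\widetilde b$ at level $r$ (to control the remaining head term, through the antiderivative relation) together with the induction hypothesis for $C$ at lower levels (to kill the correction terms, with $p\mid\beta_p$ absorbing one of them) then yields the level-$r$ congruence for $C$.

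\noindent For the converse direction, when $A_1\in\Z_p^\times$ the substitution $q=q(s)$, $q(s)\in s\Z_p[[s]]$, carries $\sum_nc_nq^{n-1}\,dq$ back to $\omega(t(q(s)))=\omega(s)=\sum_nb_ns^{n-1}\,ds$; thus the pair $(c_n),q(s)$ stands to $(b_n)$ exactly as $(b_n),t(q)$ stands to $(c_n)$, and the implication just established, run in this direction, delivers the first congruence from the second. The remaining claim $b_p\equiv\alpha_pb_1\pmod p$ is then the $(m,r)=(1,1)$ instance of the first congruence. I expect the change-of-variables step of the middle paragraph to be the main obstacle: controlling $p$-adically the defect caused by the non-commutation of $U_p$ with $t\mapsto t(q)$ — the point where the refined Frobenius congruences, the antiderivative bookkeeping, and the hypothesis $p\mid\beta_p$ all enter — is the technical core of the argument.
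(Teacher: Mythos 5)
The paper itself gives no proof of this proposition---it is quoted from Beukers \cite{B1}---so the benchmark is Beukers' argument. Your proposal assembles the right raw ingredients (passing to antiderivatives, the Frobenius congruences $t(q)^{p^r}\equiv t(q^p)^{p^{r-1}}\pmod{p^r}$, the role of $p\mid\beta_p$, and the symmetric converse via the compositional inverse when $A_1\in\Z_p^{\times}$), and your final paragraph is sound: once the forward implication is known, the converse follows by running it with $(c_n)$ and $q(s)$ in place of $(b_n)$ and $t(q)$, and $b_p\equiv\alpha_p b_1\pmod p$ is the case $(m,r)=(1,1)$. But the core of the proof is precisely the step you leave open. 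You propose an induction on $r$ with $U_p$/$V_p$ bookkeeping and a residue-class splitting in which ``a single summand reconstitutes the ASD combination at the previous level,'' yet you never exhibit that decomposition, and you close by declaring this change-of-variables step ``the main obstacle'' and ``the technical core.'' As written this is a plan, not a proof: it is not clear the proposed induction closes, since $U_p$ applied to $q\,u(t(q))t'(q)$ does not split along residue classes in the clean way asserted, and the hypothesis on $(b_n)$ must enter at all levels at once, not only through a ``head term.''

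The missing idea---and the way Beukers actually proves it---eliminates $U_p$ and the induction entirely. Set $F(t)=\sum_{n\ge1}\frac{b_n}{n}t^n$ and $f(q)=F(t(q))=\sum_{n\ge1}\frac{c_n}{n}q^n$. Comparing coefficients of $t^{mp^r}$ with $p\nmid m$ (the factor $1/(mp^r)$ converts ``$\equiv 0\pmod{p^r}$'' into $p$-integrality) shows that the full two-parameter family of congruences for $(b_n)$ is \emph{equivalent} to the single statement $G(t):=F(t)-\frac{\alpha_p}{p}F(t^p)+\frac{\beta_p}{p^2}F(t^{p^2})\in\Z_p[[t]]$, and likewise for $(c_n)$ with $f$ and the variable $q$. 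Then the substitution step is two lines: $f(q)-\frac{\alpha_p}{p}f(q^p)+\frac{\beta_p}{p^2}f(q^{p^2})=G(t(q))+\frac{\alpha_p}{p}\bigl(F(t(q)^p)-F(t(q^p))\bigr)-\frac{\beta_p}{p^2}\bigl(F(t(q)^{p^2})-F(t(q^{p^2}))\bigr)$; the first term is integral by hypothesis, and for the other two one uses the elementary lemma that $u\equiv v\pmod{p^s}$ in $q\Z_p[[q]]$ forces $\frac{u^n-v^n}{n}\in p^s\Z_p[[q]]$ (because $u^n\equiv v^n\pmod{p^{s+v_p(n)}}$), applied with $s=1$ to $t(q)^p\equiv t(q^p)\pmod p$ and $t(q)^{p^2}\equiv t(q^{p^2})\pmod p$; the prefactor $\alpha_p/p$ is then harmless, and $p\mid\beta_p$ exactly compensates $\beta_p/p^2$. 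In this formulation the non-commutation of $U_p$ with the substitution, which you single out as the obstacle, never has to be confronted. Rebuild your middle paragraph around this equivalence; your converse paragraph then goes through unchanged.
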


\noindent Given prime $p>2$, if we apply the previous proposition to a differential form $$\omega(s) = \sum_{n=0}^\infty (-1)^nF(n)s^{2n}ds,$$
and $s(q)$ - the $q$-expansion of modular function $s(\tau)$ (where $q=e^{\pi i \tau}$), we obtain that $\omega(s(q))=\sum_{n=1}^\infty c_n q^{n-1}dq$, where $c_n$ are Fourier coefficients of weight $3$ cusp form $g(\tau)\in S_3(\Gamma_2)$
$$g(q) = P(q)q\frac{d}{dq}s(q)=q + \frac{3}{2} q^3 - \frac{9}{8} q^5 - \frac{85}{16} q^7 - \frac{981}{128} q^9+\cdots=\sum_{n=1}^\infty c_n q^n.$$ 
\begin{remark}
\begin{itemize}
\item[a)]  For $p=2$ the Fourier coefficients of $s(q)$ are not $p$-integral so we can not use Proposition \ref{prop:Beukers}.
\item[b)] It is well known that a differential operator $q\frac{d}{dq}$ maps modular functions to meromorphic modular forms of weight $2$. Holomorphicity and cuspidality of $g(\tau)$ then follow since zeros of $P(\tau)$ cancel out the poles of $s(\tau)$.
\item[c)] Since Fourier coefficients of $g(\tau)$ have unbounded denominators, it follows that $\Gamma_2$ is non-congruence subgroup of $\Gamma_1(6)$ (for congruence subgroups the Hecke eigenforms (which form the basis for the space of cuspforms) have Fourier coefficients that are algebraic integers).
\end{itemize}
\end{remark}

We will show that, for all primes $p>3$, the cusp form $g(\tau)$ satisfies a three term  Atkin and Swinnerton-Dyer congruence relation with respect to the quadratic twist of the newform $f(\tau)=\sum_{n=1}^\infty \gamma(n) e^{2\pi i \tau}$ by quadratic character $\left(\frac{-1}{\cdot} \right)$.
Hence Theorem \ref{thm:1} follows from Proposition \ref{prop:Beukers} and the following proposition.

\begin{proposition}\label{prop:2}
Let $p>3$ be a prime. Then for all $m,r \in \N$, we have that
$$c_{mp^r}-\left(\frac{-1}{p} \right)\gamma(p) c_{mp^{r-1}}+\left( \frac{-6}{p}\right)p^2 c_{mp^{r-2}}\equiv 0 \pmod{p^{2r}}.$$
\end{proposition}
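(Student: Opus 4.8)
The plan is to establish Proposition \ref{prop:2} by realizing the three-term Atkin--Swinnerton-Dyer congruence as an identity in \'etale cohomology, following the philosophy of Scholl. First I would attach to the one-dimensional space $S_3(\Gamma_2)$ Scholl's compatible system of $\ell$-adic Galois representations $\tilde\rho_\ell$ of $\Gal$: this is a two-dimensional system, unramified outside $6\ell$, whose Frobenius characteristic polynomials at a good prime $p$ are exactly the polynomials $X^2 - a_p X + p^2\chi(p)$ governing the ASD congruence for $g$, where $a_p$ and the quadratic character $\chi$ are to be pinned down. The content of the proposition is then that $\tilde\rho_\ell$ coincides with $\rho'_\ell$, the Deligne representation of the newform $f(\tau)\otimes\left(\frac{-1}{\cdot}\right)$, whose Frobenius polynomial at $p$ is $X^2 - \left(\frac{-1}{p}\right)\gamma(p)X + \left(\frac{-6}{p}\right)p^2$ (the nebentype of $f$ being $\left(\frac{-6}{\cdot}\right)$, so the twist has nebentype $\left(\frac{-6}{\cdot}\right)\left(\frac{-1}{\cdot}\right)^2 = \left(\frac{-6}{\cdot}\right)$, matching the claimed coefficient). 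Once the isomorphism $\tilde\rho_\ell \cong \rho'_\ell$ is known, Scholl's comparison between \'etale and de Rham / crystalline realizations yields precisely the congruences $c_{mp^r} - a_p c_{mp^{r-1}} + p^2\chi(p) c_{mp^{r-2}} \equiv 0 \pmod{p^{2r}}$, with $a_p = \left(\frac{-1}{p}\right)\gamma(p)$ and $\chi(p) = \left(\frac{-6}{p}\right)$.

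To prove the isomorphism of the two compatible systems I would introduce the third, geometric, system $\rho_\ell$ coming directly from the explicit Weierstrass model of the universal elliptic curve $\mathcal{E}\to X(\Gamma_2)$ over the $s$-line, i.e. from the K3 surface $\mathcal{S}$ together with its fibration; concretely $\rho_\ell$ is cut out of $H^1_{\mathrm{et}}$ of the relevant open curve with coefficients in the symmetric square local system $\mathrm{Sym}^2 R^1\pi_*\Q_\ell$, or equivalently a piece of $H^2_{\mathrm{et}}$ of $\mathcal{S}$. The singular-fiber analysis already carried out (Kodaira types $I_2, I_{12}, I_3, I_3, I_2, I_2$ at $s=\infty,0,\pm i/(2\sqrt2),\pm i/3$, all cusps regular) together with the Euler-characteristic / dimension count $\dim S_3(\Gamma_2)=1$ shows that $\rho_\ell$ is two-dimensional, so by Scholl's construction $\rho_\ell \cong \tilde\rho_\ell$ more or less by definition. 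The remaining and main task is $\rho_\ell \cong \rho'_\ell$: I would compute the bad-prime behaviour of $\rho_\ell$ from the fiber data (conductor supported at $2$ and $3$, with the Kodaira types fixing the local conductor exponents), show it has the right Hodge--Tate weights $(0,2)$ and determinant $\left(\frac{-6}{\cdot}\right)\chi_{\mathrm{cyc}}^{-2}$, and then either (i) compute $\mathrm{trace}\,\Frob_p$ for enough small good primes $p$ by point-counting on the explicit curves and invoke a Faltings--Serre / Livn\'e style argument to force the isomorphism, or (ii) directly match $\rho_\ell$ with the CM induction describing $f\otimes\left(\frac{-1}{\cdot}\right)$ from a Hecke character of $\Q(\sqrt{-6})$, exhibiting complex multiplication on the geometric side.

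I expect the main obstacle to be this last step, $\rho_\ell \cong \rho'_\ell$: establishing that the $\ell$-adic system coming from a non-congruence modular curve agrees with a classical (CM) newform system is exactly the subtle point in the Atkin--Swinnerton-Dyer / Scholl circle of ideas, since the naive modularity machinery does not apply to $\Gamma_2$. The cleanest route is probably a Faltings--Serre argument: fix $\ell$ (say $\ell = 2$ or $\ell=3$ chosen coprime to the conductor and with small residual image), bound the set of primes at which one must compare traces using the known ramification set $\{2,3,\ell\}$ and the fact that both residual representations have small image, and then verify equality of Frobenius traces at that explicit finite list of primes by counting points on the fibers of $\mathcal{W}$ (or $\mathcal{S}$) modulo $p$ on one side and reading off $\gamma(p)$ from the $q$-expansion of $f$ on the other. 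A secondary technical point is to make sure the quadratic twist by $\left(\frac{-1}{\cdot}\right)$ (and the appearance of $q = e^{\pi i\tau}$ versus $e^{2\pi i\tau}$, i.e. the degree-two base change $t = s^2$) is tracked correctly through all three realizations so that the characters $\left(\frac{-1}{p}\right)$ and $\left(\frac{-6}{p}\right)$ land in the stated positions; this is bookkeeping rather than a genuine difficulty, but it is where sign errors would hide.
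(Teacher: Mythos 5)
Your overall route coincides with the paper's: Scholl's theory turns the congruence into an isomorphism between his system $\tilde\rho_\ell$ attached to $S_3(\Gamma_2)$ and the Deligne system $\rho'_\ell$ of $f\otimes\left(\frac{-1}{\cdot}\right)$, a third system $\rho_\ell$ is built from the explicit fibration $\mathcal{S}\to X(\Gamma_2)$, and a Faltings--Serre argument at $\ell=2$, with Frobenius traces computed by point counting at small good primes, forces $\rho_2\cong\rho_2'$ (the paper uses exactly your option (i), not the CM-induction option (ii)). The one genuine gap is your claim that $\rho_\ell\cong\tilde\rho_\ell$ holds ``more or less by definition'' from the dimension count, with the twist-tracking dismissed as bookkeeping. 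It does not: the explicit model over the $s$-line is identified with the universal family over $X(\Gamma_2)$ only up to quadratic twist, so the comparison (the paper invokes an argument as in Li--Long--Yang, Proposition 5.1) yields only $\rho_2\cong\tilde\rho_2\otimes\chi$ for some quadratic character $\chi$ unramified outside $\{2,3\}$. Eliminating $\chi$ is a real step: the paper does it by noting that a nontrivial $\chi$ would produce the ASD congruence with $A_p$ replaced by $\chi(p)\left(\frac{-1}{p}\right)\gamma(p)$, and then checking numerically, for each nontrivial character in the group generated by $\left(\frac{-1}{\cdot}\right),\left(\frac{2}{\cdot}\right),\left(\frac{3}{\cdot}\right)$ and a prime $p$ with $\chi(p)=-1$, that this twisted congruence fails for some $m,r$. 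Without this (or some substitute identification of the explicit model with the universal curve), your argument proves Proposition \ref{prop:2} only up to an undetermined quadratic character multiplying the middle term.

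Two smaller points. Choosing $\ell$ ``coprime to the conductor'' is inconsistent with taking $\ell=2$ or $3$, since the ramification set is exactly $\{2,3\}$; the paper works at $\ell=2$ anyway, which Scholl's Theorem 5.4 permits for $p>3$, and this is what makes the Faltings--Serre method in the pro-$2$ form applicable. Also, ``both residual representations have small image'' must itself be proved, and in the paper this is the most delicate part of the verification: showing the mod $2$ images have order $2$ requires ruling out a full $S_3$ image by combining the splitting behaviour of $7,11,13$ in $\Q(\sqrt{-6})$, the Hermite--Minkowski finiteness of $S_3$-extensions of $\Q$ unramified outside $\{2,3\}$ (located via LMFDB), and the parity of $\trace(\rho_2(\Frob_7))$. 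Your sketch leaves room for this, so it is not a gap, but it is substantially more than the phrase suggests.
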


\section{Atkin and Swinnerton-Dyer congruences for $S_3(\Gamma_2)$}\label{sec:ASD}

For a finite index non-congruence subgroup $\Gamma \subset \SL_2(\Z)$ and a prime $p$, we say that weight $k$ cusp form $f(\tau)=\sum_{n=0}^\infty a_n q^n \in S_k(\Gamma, \overline{\Z_p})$ satisfy Atkin and Swinnerton-Dyer (ASD) congruence at $p$ if there exist an algebraic integer $A_p$ and a root of unity $\mu_p$ such that for all non-negative integers $m$ and $r$ we have
\begin{equation}\label{eq:1}
a_{mp^r}-A_p a_{mp^{r-1}} + \mu_p p^{k-1}a_{mp^{r-2}}\equiv 0 \pmod{p^{(k-1)r}}.
\end{equation}
(In our example ${a_{n}}'s$ and ${A_{p}}'s$ are rational integers, and $\mu_p=\pm 1$.)

In the absence of the useful theory of Hecke operators for non-congruence subgroups, such $f(\tau)$ can be regarded as Hecke eigenfunction at prime $p$.
A discovery of these congruences by Atkin and Swinnerton-Dyer \cite{ASD} initiated a systematic study of modular forms for non-congruence subgroups. For more information see a survey article by Li, Long and Yang\cite{LL}.

In the case when the space of cusp forms is one dimensional and generated by $f(\tau)$ (which is the case for $S_3(\Gamma_2)$ and $g(\tau)$), Scholl \cite{Scholl} proved that the ASD congruence holds for all but finitely many $p$.
The congruences were obtained by embedding the module of cusp forms into certain de Rham cohomology group $DR(\Gamma,k)$ which is the de Rham realization of the motive associated to the relevant space of modular forms. At a good prime $p$, crystalline theory endows $DR(\Gamma,k)\otimes \Z_p$ with a Frobenius endomorphism whose action on $q$-expansion gives rise to Atkin and Swinnerton-Dyer congruences, i.e. if $T^2-A_p T+\mu_p p^2$ is a characteristic polynomial of Frobenius acting on $DR(\Gamma,k)\otimes \Z_p$ then congruence \eqref{eq:1} holds ($A_p$ is the trace of Frobenius).
See \cite[Section 2]{KS} for the summary of these results. 

To calculate the trace of Frobenius $A_p$, following Scholl \cite[Sections 4 and 5]{Scholl}, we associate to the subgroup $\Gamma_2$ a strictly compatible family of $\ell$-adic Galois representations of $\Gal$, $\tilde{\rho}_\ell$, that is isomorphic to $\ell$-adic realization of the motive associated to the space of cusp forms $S_3(\Gamma_2)$. From \cite[2.7. Proposition]{Scholl2} and algebraic relation between $s(\tau)$ and modular $j$-invariant $j(\tau)$
$$(s^2-\frac{1}{6})^3(s^6-\frac{7}{2}s^4+\frac{3}{4}s^2-\frac{1}{24})^3+\frac{1}{72}(s-\frac{1}{3})^2(s+\frac{1}{3})^2 s^{12} (s^2-\frac{1}{8})^3 j = 0,$$
it follows that $\tilde\rho_\ell$ is unramified outside $2, 3$ and $\ell$.

In particular, for $\ell=2$ and prime $p>3$ we have that \cite[Theorem 5.4.]{Scholl}
\begin{equation}\label{eq:2}
A_p=trace(\tilde\rho_2(\Frob_p)) \textrm{  and  } \mu_p=\det(\tilde\rho_2(Frob_p)).
\end{equation}

\section{Compatible families of $\ell$-adic Galois representations of $\Gal$}\label{sec:4}

Denote by $\rho_\ell'$ a strictly compatible family of two dimensional $\ell$-adic Galois representation of $\Gal$ attached to the newform $f(\tau)\otimes \left( \frac{-1}{\cdot}\right)$ by the work of Deligne \cite{D}. Hence, 
\begin{equation}\label{eq:3}
\textrm{trace}(\rho_\ell'(\textrm{\Frob}_p))=\left( \frac{-1}{p}\right)\gamma(p) \textrm{  and  } \det(\rho_\ell'(\Frob_p))=\left(\frac{-24}{p}\right)p^2,
\end{equation}
for prime $p \ne 2,3$ and $\ell$.

We will prove that representations $\rho_\ell'$ and $\tilde{\rho_\ell}$ are isomorphic by showing that both of them are isomorphic to the representation $\rho_\ell$ which we define now.
Proposition \ref{prop:2} then follows from \eqref{eq:2} and \eqref{eq:3}.

Let $X(\Gamma_2)^{0}$ be the complement in $X(\Gamma_2)$ of the cusps. Denote by $i$ the inclusion of $X(\Gamma_2)^0$ into $X(\Gamma_2)$, and by $h':\mathcal{S} \rightarrow X(\Gamma_2)^0$ the restriction of elliptic surface $h:\mathcal{S}\rightarrow X(\Gamma_2)$ to $X(\Gamma_2)^0$. For a prime $\ell$ we obtain a sheaf
$$\mathcal{F}_\ell = R^1h'_{*}\Q_\ell$$
on $X(\Gamma_2)^0$, and also sheaf $i_{*} \mathcal{F}_\ell$ on $X(\Gamma_2)$ (here $\Q_\ell$ is the constant sheaf on the elliptic surface $\mathcal{S}$, and $R^1$ is derived functor). The action of $\Gal$ on the $\Q_\ell$-vector space
$$W = H_{et}^1(X(\Gamma_2)\otimes \overline{\Q}, i_* \mathcal{F}_\ell)$$
defines $\ell$-adic representation $\rho_\ell$. Representation is unramified outside $2,3$ and $\ell$.
By the argument similar to \cite[Proposition 5.1.]{LLY}, $\rho_\ell$ is isomorphic to $\tilde\rho_\ell$ up to a twist by quadratic character.

Using explicit equation for $\mathcal{S}$, we can calculate $trace(\rho_l(\Frob_p))$ and $$\det(\rho_l(\Frob_p))=\frac{1}{2}((trace(\rho_l(\Frob_p))^2-trace(\rho_l(\Frob_p^2)))=\frac{1}{2}((trace(\rho_l(\Frob_p))^2-trace(\rho_l(\Frob_{p^2})))$$ for $p \ne 2,3,\ell$ using the following theorem.

\begin{theorem}
	\label{thm:trace}
	Let $q=p^s$ be a power of prime $p\ne 2,3, \ell$. The following are true:
	\begin{itemize}
		\item[(1)] We have that
		\[
		\trace(\Frob_q|W)=-\sum_{t\in X(\Gamma_j)(\mathbb{F}_q)} \trace(\Frob_q|(i_*\mathcal{F}_\ell)_t).
		\]
		\item[(2)] If the fiber $E_t := h^{-1}(t)$ is smooth, then
		\[
		\trace(\Frob_q|(i_*\mathcal{F}_\ell)_t)=\trace(\Frob_q|H^1(E_t,
		\Q_\ell))=q+1-\#E_t(\mathbb{F}_q).
		\]
		
		\item[(3)] If the fiber $E^j_t$ is singular, then
		\begin{equation*}
			\trace(\Frob_q|(i_*\mathcal{F}_\ell)_t)=
			\begin{cases}
				1 & \text{if the fiber is split multiplicative}, \\
				-1 & \text{if the fiber is nonsplit multiplicative},\\
				0 & \text{if the fiber is additive}.
			\end{cases}		
		\end{equation*}	
	\end{itemize}
\end{theorem}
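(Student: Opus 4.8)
The three statements are three applications of the Grothendieck--Lefschetz trace formula, glued together by a local analysis of the sheaf $i_*\mathcal{F}_\ell$ at the cusps, and I would carry them out in that order. For \textbf{(1)}, apply the Grothendieck--Lefschetz trace formula on the smooth projective curve $X(\Gamma_2)$ over $\mathbb{F}_q$ to the constructible sheaf $\mathcal{G}:=i_*\mathcal{F}_\ell$,
\[
\sum_{k=0}^{2}(-1)^k\trace\bigl(\Frob_q\mid H^k(X(\Gamma_2)\otimes\overline{\mathbb{F}_q},\mathcal{G})\bigr)=\sum_{t\in X(\Gamma_2)(\mathbb{F}_q)}\trace\bigl(\Frob_q\mid\mathcal{G}_t\bigr),
\]
so that (1) reduces to the vanishing $H^0=H^2=0$, the residual term being $-\trace(\Frob_q\mid H^1)$. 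Here $H^0(X(\Gamma_2)\otimes\overline{\mathbb{F}_q},i_*\mathcal{F}_\ell)=H^0(X(\Gamma_2)^0\otimes\overline{\mathbb{F}_q},\mathcal{F}_\ell)$ is the space of geometric-monodromy invariants of the rank-two sheaf $R^1h'_*\Q_\ell$; since $\mathcal{S}\to X(\Gamma_2)$ is a base change of the universal elliptic curve and $\Gamma_2$ has finite index in $\SL_2(\Z)$, the monodromy image is Zariski dense in $\SL_2$, hence acts irreducibly, so there are no nonzero invariants. For $H^2$ I would use the exact sequence $0\to i_!\mathcal{F}_\ell\to i_*\mathcal{F}_\ell\to\mathcal{Q}\to0$ with $\mathcal{Q}$ a skyscraper supported on the cusps, giving $H^2(X(\Gamma_2)\otimes\overline{\mathbb{F}_q},i_*\mathcal{F}_\ell)\cong H^2_c(X(\Gamma_2)^0\otimes\overline{\mathbb{F}_q},\mathcal{F}_\ell)$, which by Poincar\'e duality is the Tate twist of the monodromy coinvariants and hence also $0$. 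Part \textbf{(2)} is then immediate: for a smooth fiber, $t\in X(\Gamma_2)^0$, so $(i_*\mathcal{F}_\ell)_t=(\mathcal{F}_\ell)_t$, which by proper base change is $H^1(E_t,\Q_\ell)$, and the Lefschetz trace formula for the curve $E_t$, whose $H^0$ and $H^2$ contribute $1$ and $q$, gives $\#E_t(\mathbb{F}_q)=1-\trace(\Frob_q\mid H^1(E_t,\Q_\ell))+q$.

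For \textbf{(3)}, the point $t$ is a cusp, and the stalk of the underived pushforward along the open immersion is the inertia invariants, $(i_*\mathcal{F}_\ell)_t=(\mathcal{F}_\ell)^{I_t}$, a description compatible with the Frobenius action; it then remains to read off the local monodromy at $t$ from the Kodaira type via the N\'eron--Ogg--Shafarevich criterion and the theory of the Tate curve. If $E_t$ is of multiplicative type $I_n$ then (as $p\ne2,3$, with $n$ prime to $p$) the local monodromy is unipotent and nontrivial, so $(\mathcal{F}_\ell)^{I_t}$ is one-dimensional, and Frobenius acts on this line through the unramified quadratic character that is trivial precisely in the split case; the trace is thus $+1$ for split multiplicative reduction and $-1$ for non-split. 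If $E_t$ is additive then the local monodromy has no nonzero invariants --- it is of finite order exceeding $1$ with no eigenvalue $1$ in the potentially good case, and $-1$ times a nontrivial unipotent in the $I_n^*$ case --- so $(\mathcal{F}_\ell)^{I_t}=0$ and the trace is $0$. (For $\mathcal{S}$ over $X(\Gamma_2)$ all bad fibers are of type $I_n$, so only the first alternative actually arises, but the additive case is recorded for completeness.)

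The formal skeleton above is routine; the two inputs that require real work are, first, the vanishing $H^0=H^2=0$, which hinges on the monodromy of the elliptic surface being Zariski dense in $\SL_2$ --- this is the only place the finite index of $\Gamma_2$ in $\SL_2(\Z)$ enters --- and, second, the identification in (3) of the Frobenius eigenvalue on the one-dimensional invariant line with the split/non-split dichotomy, which is exactly the Tate-curve description of the N\'eron model of a multiplicative fiber. I expect the latter to be the main obstacle to write down carefully, since it requires matching the arithmetic of the degeneration (split versus non-split over $\mathbb{F}_q$) with the Galois action on the nearby cycles at the cusp.
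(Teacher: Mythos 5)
The paper does not actually prove Theorem \ref{thm:trace}: it is invoked as a standard computational tool (of the kind used, e.g., in \cite{LLY} and in Scholl's work), so there is no internal proof to compare against. Your argument is the standard one and is essentially correct: Grothendieck--Lefschetz for the constructible sheaf $i_*\mathcal{F}_\ell$ on the proper curve, vanishing of $H^0$ and $H^2$ via irreducibility of the monodromy (equivalently non-isotriviality of the fibration, visible from the $I_n$ fibers) together with Poincar\'e duality, proper base change for the smooth fibers, and the inertia-invariants description of the stalk at a cusp combined with the Tate-curve analysis giving trace $+1$, $-1$, $0$ in the split, non-split, additive cases. The one step you elide, and which a careful writeup must include, is the passage between characteristic $0$ and characteristic $p$: the theorem's $W$ is $H^1_{et}(X(\Gamma_2)\otimes\overline{\Q}, i_*\mathcal{F}_\ell)$, while your trace formula is applied to a sheaf constructed over $\mathbb{F}_q$, so you need to spread $(X(\Gamma_2), \mathcal{S}, i)$ out over $\Z[1/6\ell]$ and check that the formation of $i_*\mathcal{F}_\ell$ commutes with reduction mod $p$ (which holds here because the cusps form disjoint sections over $\Z[1/6]$ and the local monodromy is tame for $p\ne 2,3$), invoking smooth-proper base change to identify $\trace(\Frob_q|W)$ with the trace on the special fibre; relatedly, your Zariski-density-of-$\Gamma_2$ argument for $H^0=H^2=0$ is a characteristic-zero (topological) statement and should either be transferred through this same comparison or replaced by the non-isotriviality argument directly in characteristic $p$. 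These are standard but genuinely part of the proof; with them added your proposal is a complete proof of the statement.
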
		

\section{Serre-Faltings method and proof of Proposition \ref{prop:2}}\label{sec:Serre}

We will prove the following proposition.

\begin{proposition}\label{prop:3}
For every prime $\ell$ the representations $\rho_\ell$ and $\rho_\ell'$ are isomorphic.
\end{proposition}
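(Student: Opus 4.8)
The plan is to apply the Serre–Faltings method: to prove that two strictly compatible families of two-dimensional $\ell$-adic Galois representations of $\Gal$ are isomorphic, it suffices to check that their traces of Frobenius agree on a finite, effectively computable set of primes. More precisely, since both $\rho_\ell$ and $\rho_\ell'$ are unramified outside $S=\{2,3,\ell\}$, take the fixed field $K$ of the intersection of the kernels of the mod-$\ell$ reductions of the semisimplifications (for a convenient small $\ell$, e.g. $\ell=2$); $K/\Q$ is a finite extension ramified only in $S$, and by the Chebotarev density theorem it is enough to verify $\trace(\rho_\ell(\Frob_p))=\trace(\rho_\ell'(\Frob_p))$ and $\det(\rho_\ell(\Frob_p))=\det(\rho_\ell'(\Frob_p))$ for a set of primes $p$ whose Frobenius elements cover all conjugacy classes of $\Gal(K/\Q)$. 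Faltings' refinement bounds the number of primes one must test in terms of the ramification of $K$, so only finitely many primes $p\ne 2,3,\ell$ need to be checked. One also has to match determinants: for $\rho_\ell'$ this is $\left(\frac{-24}{p}\right)p^2$ by \eqref{eq:3}, and for $\rho_\ell$ the determinant is the cyclotomic character times the Nebentypus character of the elliptic surface, which I would identify explicitly from the model of $\mathcal{S}$.

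The second ingredient is the actual computation of $\trace(\rho_\ell(\Frob_p))$ for the required test primes, and this is where Theorem \ref{thm:trace} enters. By part (1) of that theorem, $\trace(\Frob_q|W)$ is the negative of a sum over $t\in X(\Gamma_2)(\mathbb{F}_q)$ of local traces; by part (2), for the finitely many $t$ with smooth fiber this local trace equals $q+1-\#E_t(\mathbb{F}_q)$, which I would compute from the explicit Weierstrass model of the fiber of $\mathcal{S}$ obtained from $(x+y)(x+z)(y+z)-8xyz=\frac{1}{s^2}xyz$ by the usual change of coordinates; by part (3), the cusps $s=\infty,0,\pm i/(2\sqrt2),\pm i/3$ contribute $\pm1$ or $0$ according to the Kodaira type of the fiber (the types $I_2,I_{12},I_3,I_3,I_2,I_2$ were already identified via Tate's algorithm), with the sign determined by whether the reduction is split or nonsplit over $\mathbb{F}_q$. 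Summing these contributions for each small prime $p$ and comparing with $\left(\frac{-1}{p}\right)\gamma(p)$ — which one reads off from the $q$-expansion of the newform $f$, equivalently from the CM description $\gamma(p)=2(a^2-6b^2)$ when $p=a^2+6b^2$ and $0$ otherwise — finishes the verification.

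The main obstacle I anticipate is bookkeeping rather than conceptual: one must (a) correctly pin down the twist, i.e. determine precisely which quadratic character relates $\rho_\ell$, $\tilde\rho_\ell$ and $\rho_\ell'$, since the excerpt only asserts they agree ``up to a twist by a quadratic character,'' and the Nebentypus $\left(\frac{-1}{\cdot}\right)$ appearing in Proposition \ref{prop:2} must come out exactly right; and (b) handle the ramified primes $2$ and $3$ and the model's bad reduction carefully, making sure the sheaf $i_*\mathcal{F}_\ell$ on $X(\Gamma_2)$ is the right extension and that the point count at cusps uses the correct local type over $\mathbb{F}_q$ (the splitting behavior of $I_n$ fibers can change with $q$). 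Once the twist is fixed and the determinant characters are matched, the remaining work is the finite Chebotarev check: tabulate $\trace(\rho_\ell(\Frob_p))$ via Theorem \ref{thm:trace} for all primes up to the Faltings–Serre bound attached to $K$, compare with $\left(\frac{-1}{p}\right)\gamma(p)$, and conclude $\rho_\ell\cong\rho_\ell'$, which combined with the isomorphism $\rho_\ell\cong\tilde\rho_\ell$ up to twist and the determinant computation forces the twist to be trivial in the relevant sense and yields Proposition \ref{prop:2} via \eqref{eq:2} and \eqref{eq:3}.
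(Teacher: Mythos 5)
Your overall strategy (reduce to a finite trace/determinant check at small primes, computed via Theorem \ref{thm:trace} and the explicit model of $\mathcal{S}$, then conclude by a Faltings--Serre argument) is in the same spirit as the paper, but the finite criterion you invoke is not correctly formulated, and the step that actually makes the paper's argument work is missing. Checking that traces and determinants agree at primes whose Frobenius elements cover the conjugacy classes of $\Gal(K/\Q)$, where $K$ is cut out by the mod-$2$ reductions, only controls the residual representations; it does not by itself force the $2$-adic representations to be isomorphic, and there is no clean ``Faltings bound in terms of the ramification of $K$'' of the kind you describe. The finite criterion the paper uses (Theorem \ref{thm:3}, Scholl's formulation of the Serre--Faltings method) has a crucial hypothesis: the images of $\rho_2$ and $\rho_2'$ must be pro-$2$-groups. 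Under that hypothesis it suffices to match traces and determinants on a set $G$ of Frobenius elements surjecting onto $(\Z/2\Z)^r$ under a maximal independent family of quadratic characters unramified outside $S=\{2,3\}$ (the paper takes $\chi_1=\left(\frac{-1}{\cdot}\right)$, $\chi_2=\left(\frac{2}{\cdot}\right)$, $\chi_3=\left(\frac{3}{\cdot}\right)$ and $G=\{\Frob_p: 31\le p\le 73\}$).

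The missing idea, and the genuinely nontrivial part of the paper's proof, is the verification of that hypothesis: one must show the mod-$2$ image of $\rho_2$ has order $2$, which is done by ruling out the alternative that the residual image is all of $\GL_2(\Z/2\Z)\cong S_3$. The paper does this by computing $\trace(\rho_2(\Frob_p))$ and $\det(\rho_2(\Frob_p))$ at $p=7,11,13$, noting that the unique quadratic subfield of the would-be $S_3$-extension must be $\Q(\sqrt{-6})$, invoking Hermite--Minkowski together with an LMFDB enumeration of $S_3$-extensions of $\Q$ unramified outside $\{2,3\}$ (there is exactly one candidate, containing the cubic field $\Q(s)$ with $s^3+3s-2=0$), and then deriving a contradiction from the fact that $7$ is inert in that cubic field while $\trace(\rho_2(\Frob_7))=10$ is even. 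Without this residual-image analysis (or an alternative such as Livn\'e's criterion, which likewise needs control of the residual image and trace parities), your plan of ``tabulate traces up to a bound and compare'' does not close the argument. Your point (a) about pinning down the quadratic twist between $\rho_\ell$ and $\tilde\rho_\ell$ is legitimate but pertains to deducing Proposition \ref{prop:2}, not to Proposition \ref{prop:3} itself; the paper settles it separately by numerically falsifying the twisted ASD congruences for each nontrivial character in $\langle\chi_1,\chi_2,\chi_3\rangle$.
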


Since the families are strictly compatible, by Chebotarev density theorem it is enough to prove that $\rho_2$ and $\rho_2'$ are isomorphic. We apply the method of Serre and Faltings as formulated in \cite[Section 5]{Scholl2}.

\begin{theorem}\label{thm:3}
For a finite set of primes $S$ of $\Q$, let $\chi_1, \ldots, \chi_r$ be a maximal independent set of quadratic characters of $\Gal$ unramified outside $S$, and $G$ a subset of $\Gal$ such that the map $(\chi_1, \ldots, \chi_r):G \rightarrow (\Z/2\Z)^r$ is surjective.

Let $\sigma, \sigma':\Gal \rightarrow \GL_2(\Q_2)$ be continuous semisimple representation unramified away from $S$, whose images are pro-2-groups. If for every $g\in G$
$$\trace(\sigma(g))=\trace(\sigma'(g)) \textrm{ and } \det(\sigma(g))=\det(\sigma'(g)),$$
then $\sigma$ and $\sigma'$ are isomorphic.
\end{theorem}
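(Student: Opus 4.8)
The plan is to prove this by the Faltings--Serre method (following Serre's letter and the exposition in \cite{Scholl2}), the hypothesis that the images of $\sigma,\sigma'$ be pro-$2$ groups playing the role that absolute irreducibility of the residual representation plays in the more classical versions. I would first reduce to showing $\trace\sigma(g)=\trace\sigma'(g)$ for \emph{all} $g\in\Gal$: since $\sigma$ and $\sigma'$ are semisimple, the Brauer--Nesbitt theorem (a semisimple representation is determined by its character) then gives $\sigma\cong\sigma'$. So suppose $\trace\sigma\ne\trace\sigma'$. One may also reduce to the case $\det\sigma=\det\sigma'$ as characters, by twisting $\sigma'$ by a continuous character trivial on $G$ — such a character being supplied by the structure of abelian $2$-extensions of $\Q$ unramified outside $S$ — though in our application the two determinants are known explicitly and already coincide.

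The core is a \emph{linearization}. Put $\rho=\sigma\oplus\sigma'$; after conjugating its (compact) image into $\GL_2(\Z_2)$, let $A\subseteq M_2(\Z_2)\times M_2(\Z_2)$ be the closed $\Z_2$-subalgebra generated by $\rho(\Gal)$. Since $\rho(\Gal)$ is a pro-$2$ group, $A\otimes\mathbb{F}_2$ is the $\mathbb{F}_2$-algebra generated by the image of the finite $2$-group $\rho(\Gal)\bmod 2$, hence local; thus $A$ is a complete local $\Z_2$-algebra with residue field $\mathbb{F}_2$, maximal ideal $\mathfrak m\supseteq 2A$, and $\rho(\Gal)\subseteq 1+\mathfrak m$. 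The hypothesis $\trace\sigma\ne\trace\sigma'$ means that $\sigma$ and $\sigma'$ first become non-isomorphic at some finite $2$-adic level, and the Faltings--Serre construction extracts from that level a \emph{nonzero continuous homomorphism} $\phi\colon\Gal\to V$ into a finite $\mathbb{F}_2$-vector space $V$ — built, for instance, from $\mathfrak m/(\mathfrak m^2+2\mathfrak m)$, or concretely from the mod-$2$ reduction of the normalization $2^{-n}(\trace\sigma-\trace\sigma')$, where $2^n$ is the exact power of $2$ dividing all its values. This $\phi$ is unramified outside $S$ because $\rho$ is, and it vanishes on $G$: here one feeds in $\trace\sigma=\trace\sigma'$ and $\det\sigma=\det\sigma'$ on $G$, the identities $\trace(XY)=\trace X\,\trace Y-\det Y\,\trace(XY^{-1})$ and $\trace(X^2)=(\trace X)^2-2\det X$ for $2\times2$ matrices, and the vanishing of traces of elements of a $2$-group in $\GL_2(\mathbb{F}_2)$.

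Given such a $\phi$, one finishes formally. A continuous homomorphism from $\Gal$ into an elementary abelian $2$-group, unramified outside $S$, factors through $\mathrm{Gal}(L/\Q)$, with $L$ the compositum of the quadratic fields unramified outside $S$; as $\mathrm{Gal}(L/\Q)\cong(\Z/2\Z)^r$ with $\chi_1,\dots,\chi_r$ a dual basis, $\phi$ factors as $\Gal\xrightarrow{(\chi_1,\dots,\chi_r)}(\Z/2\Z)^r\to V$. Since $(\chi_1,\dots,\chi_r)$ is surjective on $G$ while $\phi|_G=0$, the second arrow is zero, hence $\phi=0$, contradicting its non-vanishing. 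Therefore $\trace\sigma=\trace\sigma'$ on all of $\Gal$, and $\sigma\cong\sigma'$.

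The step I expect to be the real obstacle is the linearization itself — constructing $\phi$ and, above all, verifying that the hypotheses on $G$ force $\phi|_G=0$. This is where the pro-$2$ assumption enters essentially, where one must separately treat the few possible shapes of the residual representations $\bar\sigma,\bar\sigma'$ (trivial, or image of order two), and where the $2$-adic matrix computations modulo one extra power of $2$ are carried out; by contrast the surrounding reductions and the concluding argument are routine.
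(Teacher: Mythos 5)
The paper does not actually prove this statement: it is imported as the Serre--Faltings criterion in the formulation of \cite[Section 5]{Scholl2}, so the only comparison available is with that standard argument. Your roadmap matches its shape (Brauer--Nesbitt reduction to equality of traces, the local $\Z_2$-algebra $A$ generated by the image of $\sigma\oplus\sigma'$, the fact that any continuous homomorphism from $\Gal$ to an elementary abelian $2$-group unramified outside $S$ factors through $(\chi_1,\dots,\chi_r)$, and the surjectivity of $G$ onto $(\Z/2\Z)^r$), and your structural claims about $A$ being local with residue field $\mathbb{F}_2$ are correct.

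There is, however, a genuine gap, and it sits exactly where you place it: the ``linearization'' is never carried out, and neither of your two candidate maps $\phi$ works as stated. If $\phi(g)=\rho(g)-1 \bmod (\mathfrak{m}^2+2A)$, then $\phi$ is indeed a continuous homomorphism into a finite $\mathbb{F}_2$-vector space, but its vanishing on $G$ does \emph{not} follow from trace and determinant equalities on $G$: membership of $\rho(g)$ in $1+\mathfrak{m}^2+2A$ is a much stronger condition than $\trace\,\sigma(g)=\trace\,\sigma'(g)$. In the standard proof this object is used in the opposite direction, namely to show by a Frattini argument that $\rho(G)$ topologically generates $\rho(\Gal)$ (the same abelian Frattini argument, applied to the character $\det\sigma/\det\sigma'$, is also the clean way to get equality of determinants everywhere; your proposed twist of $\sigma'$ by a character trivial on $G$ is both unnecessary and delicate, since a square root of the ratio character need not exist). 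If instead $\phi$ is the mod-$2$ reduction of $2^{-n}(\trace\,\sigma-\trace\,\sigma')$, then $\phi|_G=0$ and $\phi\neq 0$ are immediate, but the claim that $\phi$ is a homomorphism (equivalently, that the deviation factors through an elementary abelian $2$-quotient of $\Gal$) is precisely the substance of the theorem: it requires the pro-$2$ hypothesis, equality of determinants on all of $\Gal$, the $2\times 2$ trace identities you quote, and an induction along the filtration of $A$ by powers of $\mathfrak{m}$ with a case discussion of the residual images. Invoking ``the Faltings--Serre construction'' for this step is circular, since the statement being proved \emph{is} that criterion. To have a proof you must either carry out Serre's induction in detail (as in \cite[Section 5]{Scholl2}, or via Livn\'e's deviation-group formalism) or cite it; as written, the proposal is a correct outline whose central lemma is asserted rather than proved.
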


\begin{proposition}
Images of representations $\rho_2$ and $\rho_2'$ are pro-2-groups.
\end{proposition}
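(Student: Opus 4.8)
The plan is to reduce the statement to a single congruence modulo $2$ satisfied by the traces of Frobenius, exploiting that $\GL_2(\mathbb{F}_2)$ contains hardly any $2$-subgroups. First I would pass to the residual representations: both $\rho_2$ and $\rho_2'$ have compact image, being continuous images of $\Gal$, so after conjugating inside $\GL_2(\Q_2)$ we may assume each takes values in $\GL_2(\Z_2)$. The kernel of the reduction $\GL_2(\Z_2)\to\GL_2(\mathbb{F}_2)$ is $1+2M_2(\Z_2)$, which is a pro-$2$ group; hence $\rho_2(\Gal)$ (resp.\ $\rho_2'(\Gal)$) is a pro-$2$ group if and only if its image modulo $2$ in $\GL_2(\mathbb{F}_2)$ is a $2$-group. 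Now $\GL_2(\mathbb{F}_2)$ has order $6$; an element of order $3$ has characteristic polynomial $T^2+T+1$, hence trace $1$, whereas the identity and the elements of order $2$ have trace $0$. Therefore a subgroup of $\GL_2(\mathbb{F}_2)$ in which every element has trace $0$ contains no element of order $3$, so has order $1$ or $2$, i.e.\ is a $2$-group. It thus suffices to show $\trace(\rho_2(g))\equiv\trace(\rho_2'(g))\equiv 0\pmod{2}$ for every $g\in\Gal$; since the trace is continuous and, the representations being unramified outside $\{2,3\}$, the Frobenius classes at primes $p\neq2,3$ are dense by the Chebotarev density theorem, it is enough to verify this at $g=\Frob_p$ for each prime $p\neq2,3$.

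For $\rho_2'$ this is immediate from \eqref{eq:3}: $\trace(\rho_2'(\Frob_p))=\left(\frac{-1}{p}\right)\gamma(p)$, and $\gamma(p)$ is even for every odd $p$ — by the Remark it equals $2(a^2-6b^2)$ when $p=a^2+6b^2$, and it vanishes when $p$ is inert in $\Q(\sqrt{-6})$ because $f$ has complex multiplication.

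For $\rho_2$ I would apply Theorem \ref{thm:trace} with $q=p$. The surface $\mathcal S\to X(\Gamma_2)$ is a base change of the universal elliptic curve carrying a point of order $6$, hence of a rational point of order $2$; so every smooth fibre $E_t$ over $\mathbb{F}_p$ has $\#E_t(\mathbb{F}_p)$ even, and by part (2) such a fibre contributes $p+1-\#E_t(\mathbb{F}_p)\equiv0\pmod2$. The singular fibres lie over the six cusps $s=0,\infty,\pm\frac{i}{2\sqrt2},\pm\frac{i}{3}$, all of multiplicative Kodaira type $I_n$ by the computation in Section \ref{sec:2}, so by part (3) each $\mathbb{F}_p$-rational cusp contributes $\pm1$. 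Among these, $s=0$ and $s=\infty$ are always $\mathbb{F}_p$-rational, while the remaining four form the Galois-conjugate pairs $\{\pm\frac{i}{2\sqrt2}\}$ and $\{\pm\frac{i}{3}\}$, in each of which a point is $\mathbb{F}_p$-rational exactly when its negative is; thus the number of $\mathbb{F}_p$-rational cusps is even. Summing over $X(\Gamma_2)(\mathbb{F}_p)$, the right-hand side of Theorem \ref{thm:trace}(1) is minus a sum of an even number of terms $\pm1$ together with even terms, hence $\equiv0\pmod2$, and so $\trace(\rho_2(\Frob_p))\equiv0\pmod2$.

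The step I expect to need the most care is the parity count just above: one must know that every singular fibre is of multiplicative type (so that each $\mathbb{F}_p$-rational cusp contributes $\pm1$ rather than $0$), that the four cusps other than $0$ and $\infty$ genuinely pair off under $s\mapsto-s$, and that no additional bad fibres arise for $p\neq2,3$ — all of which is supplied by the Tate-algorithm list of fibre types in Section \ref{sec:2} and by Scholl's statement that $\rho_\ell$ is unramified outside $2,3,\ell$. The determinants play no role (the trace condition alone forces the residual images to be $2$-groups); for the record, $\det\rho_2'(\Frob_p)=\left(\frac{-24}{p}\right)p^2$ by \eqref{eq:3} and $\det\rho_2(\Frob_p)=\pm p^2$ by purity of $W$, both $\equiv1\pmod2$.
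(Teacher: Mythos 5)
Your overall strategy is sound and genuinely different from the paper's. You prove the stronger statement that \emph{every} trace is even: a subgroup of $\GL_2(\mathbb{F}_2)\cong S_3$ all of whose elements have trace $0$ contains no $3$-cycle, hence is a $2$-group, and by Chebotarev it suffices to check evenness at $\Frob_p$ for $p\ne 2,3$; for $\rho_2$ you get this from a parity count in Theorem \ref{thm:trace} (even point counts on smooth fibres via the $6$-torsion section, an even number of $\mathbb{F}_p$-rational multiplicative cusps). The paper instead computes traces and determinants only at $p=7,11,13$, observes that these force the residual image to be either of order $\le 2$ or all of $S_3$, and then excludes the $S_3$ case by a field-theoretic argument: the quadratic subfield of the would-be $S_3$-extension must be $\Q(\sqrt{-6})$ (unramified outside $\{2,3\}$, with $7,11$ split and $13$ inert), Hermite--Minkowski plus an LMFDB search leaves a single candidate $S_3$-field, and $7$ being inert in its cubic subfield contradicts $\trace(\rho_2(\Frob_7))=10$ being even. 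Your route, if completed, avoids the LMFDB/number-field step entirely; the paper's route avoids any uniform-in-$p$ statement about the integral model or about the CM coefficients, needing only finitely many point counts.

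Two steps in your write-up are genuine gaps as stated. First, the parity of $\gamma(p)$: you justify $\gamma(p)\equiv 0\pmod 2$ only when $p=a^2+6b^2$ or when $p$ is inert in $\Q(\sqrt{-6})$, but this misses the primes $p\equiv 5,11\pmod{24}$ (e.g.\ $p=11$), which are \emph{split} in $\Q(\sqrt{-6})$ yet not represented by $x^2+6y^2$; for these the Remark only gives $\gamma(p)\equiv 0\pmod p$, which says nothing about parity. The claim is true --- $\gamma(p)=0$ there because the primes above $p$ are non-principal and the Hecke character takes purely imaginary conjugate values --- but this class-number-$2$ CM fact is not in the paper and must be argued. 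Second, your parity count for $\rho_2$ needs, uniformly for all $p>3$: that the order-$6$ section stays of exact order $6$ in every smooth fibre over $\mathbb{F}_p$, that the bad fibres over $\overline{\mathbb{F}}_p$ are exactly the reductions of the six cusps, and that they remain multiplicative (an additive degeneration at, say, $s=0$ alone would break the parity). You attribute this to the characteristic-zero Tate-algorithm computation and to the unramifiedness of $\rho_\ell$ outside $\{2,3,\ell\}$, neither of which literally gives it; what does give it is the fine-moduli interpretation of the family over $\Z[1/6]$ (injectivity of specialization of prime-to-$p$ torsion into the N\'eron fibre then even forces the bad fibres to be multiplicative, since additive fibres cannot contain a point of order $6$ when $p>3$), together with the explicit discriminant showing no new zeros appear mod $p$. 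With these two points supplied, your argument goes through and is a clean alternative to the paper's proof.
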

\begin{proof}
We can assume that the images of both representations are contained in $\GL_2(\Z_2)$. It is enough to prove that the images of their mod $2$ reductions have order $2$ (since the kernel of the natural homomorphism $\GL_2(\Z/2^{k+1}\Z)  \rightarrow \GL_2(\Z/2^{k}\Z)$ is a $2$-group).
For primes $p\in \{7,11,13\}$ using Theorem \ref{thm:trace} and an explicit model for surface $\mathcal{S}$, we compute that
\begin{equation*}
\textrm{trace}(\rho_2(\textrm{\Frob}_p))=\left( \frac{-1}{p}\right)\gamma(p) \textrm{  and  } \det(\rho_2(\Frob_p))=\left(\frac{-24}{p}\right)p^2.
\end{equation*}

Moreover, if $\left(\frac{-6}{p}\right)=-1$, we find that $\gamma(p)=0$ and the eigenvalues of $\rho_2(\Frob_p)$ are $\pm p\sqrt{-1}$ from which it follows that mod $2$ reduction of $\rho_2(\Frob_p)$ has order $2$. If $\left(\frac{-6}{p}\right)=1$, then the eigenvalues mod $2$ are equal, and mod $2$  reduction of $\rho_2(\Frob_p)$ is trivial.

Since the group $\GL_2(\Z/2\Z)$ is isomorphic to the symmetric group $S_3$, if we assume that the mod $2$ image is not of order two, then it must be the whole group. In that case, denote by $L$ a $S_3$ Galois extension of $\Q$ cut out by mod $2$ reduction of $\rho_2$ (i.e. $L$ is the fixed field of the kernel of the mod $2$ reduction of $\rho_2$). Then $L$ contains a unique quadratic field $K$ which is unramified outside $2$ and $3$ in which $7$ and $11$ split and $13$ is inert. It follows that $K=\Q(\sqrt{-6})$. We know by the Hermite-Minkowski theorem that there are finitely many $S_3$ extensions of $\Q$ unramified outside $2$ and $3$, and using LMFDB \cite{lmfdb} we find that there is only one such field $\Q(x)$, where $x^6-3x^2+6=0$, whose Galois group contains $\Q(\sqrt{-6})$. This field contains a cubic field $F = \Q(s)$, where $s^3+3s-2=0$. One finds that $7$ is inert in $F$, hence $\rho_2(\Frob_7)$ has order $3$. This is impossible since $\trace(\rho_2(\Frob_7))=10$ is an even number which implies that mod $2$ reduction of $\rho_2(\Frob_7)$ has order $1$ or $2$.
\end{proof}

To apply Theorem \ref{thm:3} for $S=\{2,3\}$ we choose characters 
$$\chi_1=\left( \frac{-1}{p}\right),  \chi_2=\left( \frac{2}{p}\right), \chi_3=\left( \frac{3}{p}\right),$$
and $G=\{\Frob_p:31 \le p \le 73, \textrm{ for } p \textrm{ prime}\}$.
Using Theorem \ref{thm:trace} and \eqref{eq:3} we can check that
$$\trace(\rho_2(g))=\trace(\rho'_2(g)) \textrm{ and } \det(\rho_2(g))=\det(\rho'_2(g)),$$
for all $g \in G$, hence Proposition \ref{prop:3} follows.

To prove Proposition \ref{prop:2} (and consequently Theorem \ref{thm:1}), we need to show that representations $\rho_\ell$ and $\tilde{\rho_\ell}$ are isomorphic. In particular, it is enough to prove this claim for $\ell=2$. By the argument similar to \cite[Proposition 5.1.]{LLY}, it follows that $\rho_2$ is isomorphic to $\tilde\rho_2$ up to a twist by a quadratic character. Since both representations are unramified outside $2$ and $3$, this character is an element of the group generated by characters $\chi_1$, $\chi_2$ and $\chi_3$. For every nontrivial $\chi$ from that group, we can find a prime $p>3$ such that $\chi(p)=-1$, and numerically check that ASD congruence relation for the Fourier coefficients of $g(\tau)$
$$c_{mp^r}-\chi(p)\left(\frac{-1}{p} \right)\gamma(p) c_{mp^{r-1}}+\left( \frac{-6}{p}\right)p^2 c_{mp^{r-2}}\equiv 0 \pmod{p^{2r}},$$
does not hold for some choice of $m$ and $r$. The claim follows.

All the computations in this paper were done in SageMath \cite{sagemath} and Magma \cite{magma}.

\section{Future work}

It is natural to ask do similar mod $p$ congruences exist for the numbers $F(\frac{p-1}{n})$, where $n>2$ and $p \equiv 1 \pmod{n}$? E.g. when $n=3$, by considering the $3$-cover (defined by $t=s^3$) of the elliptic surface $\mathcal{W}$, one can show that for $p\equiv 1 \pmod{3}$ we have $F\left(\frac{p-1}{3}\right) \equiv A_p \pmod{p}$, where $A_p$ is the trace of $\Frob_p$ under the Galois representation defined analogously to $\rho_\ell$ (in this situation the representation is four-dimensional).

In the paper under the preparation, we are going to investigate this phenomena for sequence $F$ and other Ap\'ery numbers.

\section{Acknowledgments}
The author would like to thank Robert Osburn and Armin Straub for bringing this problem to his attention.

The author was supported by the QuantiXLie Centre of Excellence, a project co-financed by the Croatian Government and European Union through the European Regional Development Fund - the Competitiveness and Cohesion Operational Programme (Grant KK.01.1.1.01.0004), and by the Croatian Science Foundation under the project no. IP-2018-01-1313.


\bibliographystyle{siam}
\bibliography{bibl}

\begin{thebibliography}{10}

\bibitem{Ap}
{\sc R.~Ap\'ery}, {\em Irrationalit\'e de {$\zeta 2$} et {$\zeta 3$}},
  Ast\'erisque,  (1979), pp.~11--13.
\newblock Luminy Conference on Arithmetic.

\bibitem{ASD}
{\sc A.~O.~L. Atkin and H.~P.~F. Swinnerton-Dyer}, {\em Modular forms on
  noncongruence subgroups},  (1971), pp.~1--25.

\bibitem{B1}
{\sc F.~Beukers}, {\em Another congruence for the {A}p\'ery numbers}, J. Number
  Theory, 25 (1987), pp.~201--210.

\bibitem{magma}
{\sc W.~Bosma, J.~Cannon, and C.~Playoust}, {\em The {M}agma algebra system.
  {I}. {T}he user language}, J. Symbolic Comput., 24 (1997), pp.~235--265.
\newblock Computational algebra and number theory (London, 1993).

\bibitem{D}
{\sc P.~Deligne}, {\em Formes modulaires et repr\'esentations de {${\rm
  GL}(2)$}},  (1973), pp.~55--105. Lecture Notes in Math., Vol. 349.

\bibitem{K}
{\sc M.~Kazalicki}, {\em Modular forms, hypergeometric functions and
  congruences}, Ramanujan J., 34 (2014), pp.~1--9.

\bibitem{KS}
{\sc M.~Kazalicki and A.~J. Scholl}, {\em Modular forms, de {R}ham cohomology
  and congruences}, Trans. Amer. Math. Soc., 368 (2016), pp.~7097--7117.

\bibitem{LL}
{\sc W.-C.~W. Li, L.~Long, and Z.~Yang}, {\em Modular forms for noncongruence
  subgroups}, Q. J. Pure Appl. Math., 1 (2005), pp.~205--221.

\bibitem{LLY}
\leavevmode\vrule height 2pt depth -1.6pt width 23pt, {\em On
  {A}tkin-{S}winnerton-{D}yer congruence relations}, J. Number Theory, 113
  (2005), pp.~117--148.

\bibitem{lmfdb}
{\sc T.~{LMFDB Collaboration}}, {\em The l-functions and modular forms
  database}, 2018.

\bibitem{OS}
{\sc R.~Osburn and A.~Straub}, {\em Interpolated sequences and critical
  l-values of modular forms}, arXiv:1806.05207 [math.NT].

\bibitem{Scholl}
{\sc A.~J. Scholl}, {\em Modular forms and de {R}ham cohomology;
  {A}tkin-{S}winnerton-{D}yer congruences}, Invent. Math., 79 (1985),
  pp.~49--77.

\bibitem{Scholl2}
\leavevmode\vrule height 2pt depth -1.6pt width 23pt, {\em The {$l$}-adic
  representations attached to a certain noncongruence subgroup}, J. Reine
  Angew. Math., 392 (1988), pp.~1--15.

\bibitem{Shi}
{\sc G.~Shimura}, {\em Introduction to the arithmetic theory of automorphic
  functions}, Publications of the Mathematical Society of Japan, No. 11.
  Iwanami Shoten, Publishers, Tokyo; Princeton University Press, Princeton,
  N.J., 1971.
\newblock Kan\^o Memorial Lectures, No. 1.

\bibitem{SB}
{\sc J.~Stienstra and F.~Beukers}, {\em On the {P}icard-{F}uchs equation and
  the formal {B}rauer group of certain elliptic {$K3$}-surfaces}, Math. Ann.,
  271 (1985), pp.~269--304.

\bibitem{sagemath}
{\sc {The Sage Developers}}, {\em {S}ageMath, the {S}age {M}athematics
  {S}oftware {S}ystem ({V}ersion 7.2)}, 2018.
\newblock {\tt http://www.sagemath.org}.

\bibitem{V}
{\sc H.~A. Verrill}, {\em Congruences related to modular forms}, Int. J. Number
  Theory, 6 (2010), pp.~1367--1390.

\bibitem{Zag1}
{\sc D.~Zagier}, {\em Integral solutions of {A}p\'ery-like recurrence
  equations}, in Groups and symmetries, vol.~47 of CRM Proc. Lecture Notes,
  Amer. Math. Soc., Providence, RI, 2009, pp.~349--366.

\bibitem{Zag2}
\leavevmode\vrule height 2pt depth -1.6pt width 23pt, {\em Arithmetic and
  topology of differential equations}, in Proceedings of the 2016 ECM, 2017.

\end{thebibliography}

\end{document}